\theoremstyle{plain}
\newtheorem*{corollary}{Corollary}
\newtheorem{lemma}{Lemma}
\newtheorem{theorem}{Theorem}
\theoremstyle{remark}
\newtheorem*{remark}{Remark}
\theoremstyle{definition}
\newtheorem{example}{Example}
\begin{document}

\title{On semi-homogeneous maps of degree $k$}

\author{Nina A. Erzakova}

\address{Moscow State Technical University of Civil Aviation}
\email{naerzakova@gmail.com}
\keywords{Topological properties, measure of noncompactness,   Fr\'echet derivative, asymptotic derivative, superposition operators, spaces of measurable functions, Hammerstein operators, bifurcation point, nonlinear equation.}

\begin{abstract} We study
properties of  continuous semi-homogeneous operators of degree $k$ via various functions (e.g. measures of noncompactness) on all bounded subsets of a Banach space.
We prove necessary and sufficient conditions for
 these functions to vanish on the image
   of the unit ball under these operators.
   In particular, we give criteria
 for superposition operators to be improving
 and criteria for the complete continuity of the Fr\'echet derivatives. The results obtained can be applied in various areas of both pure and applied mathematics. 
\end {abstract}

%\subjclass[2000]{Primary 14F45; Secondary 47H09, 46T20, 47H30, 46E30,  35P30  }

\subjclass[2010] {Primary 14F45; Secondary 47H08, 58C25, 47H30, 46E30, 58C40 }
%\thanks{Supported by \ldots}

\medskip

\maketitle

\section{Introduction}

 Let  $E$, $E_1$ be  Banach spaces. Denote by $\theta$ the zero element of a Banach space, by  $S_r=\{u\in E:\|u\|=r\}$, $B_\rho=\{u\in E:\|u\|\leqslant\rho\}$ the  sphere and the ball of radius $r$ with the center at $\theta$, respectively.
  Let $\mathfrak{B}$ be the family of all subsets of a Banach space $E$ and
  let $\psi\colon \mathfrak{B}\to [0,+\infty)$ be some function.
     
 For example, $\psi$ can be the diameter of a subset of $E$ or
any  measure of noncompactness (an MNC for brevity).
 
 Numerous examples  of numerical characteristics of subsets of a Banach space $E$
 can be found  in \cite {Banas}, \cite{Akhmer} and \cite{Ayerbe}.
 
 We consider the class of all continuous operators
 $T\colon  E\rightarrow E_1$  (not necessarily  linear) satisfying the property
\begin{equation}
 \exists\  k=k(T)>0 :\  \psi(T(\rho U))=\rho^k\psi(T(U)) \ \forall U\subset E,\forall \rho >0,\label{R1}
 \end{equation}
 i.e. all {\it semi-homogeneous maps of degree $k$}.
 
We are interested in obtaining necessary and sufficient conditions for $\psi(T(B_1))=0$.
This equality has different meaning for $T$ depending on $\psi$ employed.
  
First recall basic definitions and notation~(\cite [3.1.2]{Banas}, \cite[1.1.4]{Akhmer},\cite[2.3]{Ayerbe}) in a form convenient for us.

We recall that  numerical functions $\psi$ and $\psi_1$ are equivalent   in a space $E$ if there exist constants $c_1>0$ and $c_2>0$ such that 
$$c_1\psi_1(U)\leqslant\psi(U)\leqslant c_2\psi_1(U)$$
for all $U\subset E$.

Also we recall that a numerical function $\psi$ is regular if $\psi_E(U)=0$ if and only if the closure of $U$ is compact.

Sometimes we will assume that $\psi$  (which is not necessarily  regular)
satisfies some of the following properties:

\begin{enumerate}

\item\label{invariance under passage to the closure}
\textit{invariance under passage to the closure}:
$\psi(\overline U)=\psi(U)$;

\item\label{invariance under passage to the convex hull}
\textit{invariance under passage to the convex hull}:
$\psi(\mathop{\mathrm{co}}U)=\psi(U)$;

 \item\label{semi-additivity}\textit{semi-additivity}:
 $ \psi(U\cup V) = \max \{\psi(U),\psi(V)\}$;
 
 \item\label{semi-homogeneity}\textit{semi-homogeneity}:
 $ \psi(tU)=\vert t\vert\psi(U)$ ($t$ is a number); 
 
 \item\label{monotonicity}\textit{monotonicity}: $U_1\subseteq U$ implies $\psi(U_1)\leqslant \psi(U)$;
 
\item\label{algebraic semi-additivity}\textit{algebraic semi-additivity}:
 $\psi(U+V)\le\psi(U)+\psi(V)$,
  where $ U+V=\{u+v:u\in U, v\in V\}$; 
  
 \item\label{nonsingularity}\textit{nonsingularity}: $\psi$ is equal to zero
 on every one-element set;
  
  \item\label{invariance under translations}\textit{invariance under translations}:
 $\psi(U+u)=\psi(U)\ \ (u\in E)$.

 \end{enumerate}
 
 Let $U \subset E$. Then 
 \textit{the Hausdorff MNC} $\chi(U)$ of the set $U$ is the infimum of the
numbers $\varepsilon > 0$ such that $U$ has a finite $\varepsilon$-net in $E$; 
\textit{the MNC $\beta$}
is defined as the supremum of all numbers $r>0$ such that there exists an infinite 
sequence in $U$ with
$\|u_n-u_m\|\geqslant r$
 for every $n\ne m$.

The MNCs  $\chi$ and $\beta$ are regular, equivalent and
possess all the properties listed above
(see, for example, \cite[1.3]{Akhmer}).
 
 The  diameter of a subset of $E$ also
 possesses all the properties listed above
(see, for example, \cite[Remark 2.2]{Ayerbe}), but it is not regular.
 
  We will prove a criterion for $\psi(T(B_1))=0$ assuming only that $\psi$
 satisfies the properties (\ref{semi-homogeneity})--(\ref{invariance under translations})
 above.
 
 We apply this criterion to superposition operators
 and Fr\'echet derivatives.

\section{Main result}
 
 We say that a function $\psi$ (not necessarily  regular) satisfies {\itshape the spherical property} if for any continuous map $T\colon E\rightarrow E_1$ (not necessarily  linear)
  and any $\rho_1>0$
  \begin{equation}
  \psi( T(B_{\rho_1}))>0 \iff \exists\ 0<\rho_0\leqslant \rho_1:
  \psi(T(S_{\rho_0}))>0.\label{Sphere}
  \end{equation}
  
  \begin{remark}
  Note that if $\psi$ possesses  the  properties of invariance under passage to the convex hull, nonsingularity 
  and   semi-additivity  we have $\psi( B_\rho)=\psi( co(\{\theta\}\cup S_\rho)=\max \{\psi(\{\theta\}),\psi(S_\rho)\}=\psi( S_\rho)$ for every $\rho$.
 
Hence $\psi( TB_\rho)=\psi( TS_\rho)$ for every $\rho$ if  $T\colon E\rightarrow E_1$ is   linear operator.

Below we do not require that $\psi$ satisfies (\ref{invariance under passage to the closure})--(\ref{semi-additivity}).
In particular, $\psi$ is not necessarily an MNC in the sense of
(\cite [3.1]{Banas}, \cite[1.2]{Akhmer},\cite[2.1]{Ayerbe}).
  \end {remark}

 Consider the following example of a non-regular function $\psi$ possessing the spherical property:
 
 \begin{example}\label{Example8}
Following \cite{Krasnosel1} we consider the measure of nonequiabsolute continuity $\nu(U)$
of norms of elements of $U\subset L_p$ where $L_p=L_p(\Omega)$, $1\leqslant p<\infty$,
 $\Omega\subset \mathbb R^n$, $\mu(\Omega)<\infty$,
 $\mu$ is a \textit{continuous} measure  on~$\Omega$,
i.e. for every $D\subset\Omega$, $\mu(D) > 0$,
there exist $D_1, D_2 \subset D$, $D_1 \cup D_2 = D$,
$D_1 \cap D_2 = \varnothing$, $\mu(D_1)=\mu(D_2)=\frac{\mu(D)}{2}$. 
 
 Then the {\itshape  measure of nonequiabsolute continuity} $\nu(U)$ is defined by
$$
 \nu(U)= \ \ \mathop{\overline{\lim}}\limits_{\mu(D)\to 0}
 \ \ \sup\limits_{u\in U}\|P_Du\|_{L_p}, 
 $$
where $P_D \colon L_p \rightarrow L_p$ for any measurable $D \subseteq \Omega$ is  the operator $(P_D u)(s) = \left\lbrace\begin{array}{cl} u(s), & s\in D, \\
0, & x \notin D.  \end{array}\right.$ 
 
 We claim that $\nu$ possesses the spherical property.
 \end{example}
 \begin{proof}
  Let $T\colon L_p\rightarrow L_q$ ($1\leqslant p,q<\infty$)
 be a continuous operator.
Suppose $\nu(T(B_{\rho_1}))>0$ for some $\rho_1>0$.
By the definition of $\nu$, there exist a sequence of elements
$\{v_n\}$ belonging to $B_{\rho_1}$ and subsets $\{D_n\}$ such that 
$$\nu(T(B_{\rho_1}))=\lim\limits_{n\to\infty}\|P_{D_n}T(v_n)\|_{L_q}.$$
Since $[0,\rho_1]$ is a compact set in $\mathbb{R}^1$ there exists at 
least one limit point $\rho_0>0$ of the set $\{\|v_n\|_{L_p}\}\subset [0,\rho_1]$.
Thus we can extract a subsequence $\{\tilde v_n\}$
with $\lim\limits_{n\to\infty}\|\tilde v_n\|_{L_p}=\rho_0$.
Moreover, by the continuity of $T$ there exist  $\{u_n\}\subset S_{\rho_0}$ 
and subsets $\{\tilde D_n\}$ such that 
$$\nu(T(B_{\rho_1}))=\lim\limits_{n\to\infty}\|P_{\tilde D_n}T(u_n)\|_{L_q}.$$

Thus if $\nu(T(B_{\rho_1}))>0$ for some $\rho_1>0$ then there
exist $0<\rho_0\leqslant \rho_1$ and  a sequence $\{u_n\}$ belonging to $S_{\rho_0}$ 
such that $\nu(\{T(u_n)\})=\nu(T(B_{\rho_1}))$. 

By the monotonicity of $\nu$, the inequality $\nu(T(S_{\rho_0}))>0$
for some $\rho_0>0$
implies $\nu(T(B_{\rho_1}))>0$ for all $\rho_1\geqslant \rho_0$.
Thus  $\nu$ possesses the spherical property (\ref{Sphere}).
  
 We can  consider the  measure of nonequiabsolute  continuity   of norms of elements of $U$ in $E$ for any regular space E  of measurable functions not only $L_p$ (see, for example,   \cite{Erz14}).

The measure $\nu(U)$ has all properties of $\psi$ mentioned above, except the regularity, since the equality $\nu(U)=0$ is possible on noncompact sets too.
\end{proof}
  
 \begin{example}\label{Example9}
All MNCs $\psi$  equivalent to $\beta$  possess the spherical property (\ref{Sphere}).
\end{example}
\begin{proof} 
It is enough to prove  the   spherical property for $\beta$.  
Let $T$ be a continuous map $T\colon E\rightarrow E_1$ (not necessarily  linear).
  Suppose that for some $\rho_1>0$ 
 we have $\beta(T(B_{\rho_1}))>0$.
  
 By the definition of $\beta$, for every
$0<\varepsilon< \beta(T(B_{\rho_1}))/3$
  there exists an infinite sequence $\{u_n\}\subset U$ such that 
 $\|T(u_n)-T(u_m)\|>\beta(T(B_{\rho_1}))-\varepsilon$
for all $n\not=m$.

Since $[0,\rho_1]$ is a compact set in $\mathbb{R}^1$ there exists at 
least one limit point $\rho_0>0$ of the set $\{\|u_n\|\}\subset [0,\rho_1]$.
Thus 
we can extract a subsequence $\{v_n\}$
with $\lim\limits_{n\to\infty}\|v_n\|=\rho_0$.
Moreover, there exists  $\{\tilde v_n\}\subset S_{\rho_0}$ 
such that $\|T( v_n)-T(\tilde v_n)\|<\varepsilon$ by the continuity of $T$.

We have $$\beta(T(B_{\rho_1}))-\varepsilon<\|T(v_n)-T(v_m)\|=$$
$$= \|T(v_n)-T(v_m)-T(\tilde v_n)+T(\tilde v_n)-T(\tilde v_m)+T(\tilde v_m)\|\leqslant$$
$$\leqslant \|T(\tilde v_n)-T(\tilde v_m)\| + \|T( v_n)-T(\tilde v_n)\|+\|T( v_m)-T(\tilde v_m)\|\leqslant $$
$$\leqslant\|T(\tilde v_n)-T(\tilde v_m)\|+2\varepsilon$$ 
and $\beta(T(S_{\rho_0}))\geqslant\beta(\{T(\tilde v_n)\})>\beta(T(B_{\rho_1}))-3\varepsilon>0$.

By the monotonicity of $\beta$, the inequality $\beta(T(S_{\rho_0}))>0$
for some $\rho_0>0$
implies $\beta(T(B_{\rho_1}))>0$ for all $\rho_1\geqslant \rho_0$.

This completes the proof.

\end{proof}

\begin{lemma}\label{Lemma 1}
Let $\psi\colon \mathfrak{B}\rightarrow [0,+\infty)$ be a function possessing the spherical property.
Let $T\colon E\rightarrow E_1$ be a continuous operator
satisfying~(\ref{R1}) .
Then the equality
 $ \psi(T(S_{\rho_1}))=0 $ for some $\rho_1>0$
implies $ \psi(T(S_\rho))=\psi(T(B_\rho))=0 \ \forall \rho$.

\end{lemma}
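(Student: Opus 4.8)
The plan is to use the semi-homogeneity property (\ref{R1}) to transport the vanishing of $\psi$ on the image of one sphere to all spheres and balls. First I would observe that for any $\rho>0$ we can write $S_\rho = (\rho/\rho_1)\, S_{\rho_1}$, so applying (\ref{R1}) with $U = S_{\rho_1}$ and $\rho' = \rho/\rho_1$ gives $\psi(T(S_\rho)) = (\rho/\rho_1)^k\, \psi(T(S_{\rho_1})) = 0$. Thus the hypothesis $\psi(T(S_{\rho_1})) = 0$ immediately yields $\psi(T(S_\rho)) = 0$ for every $\rho > 0$; this is the easy half and uses only (\ref{R1}), not the spherical property.

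Next I would pass from spheres to balls. Fix $\rho > 0$ and consider $B_\rho$. We want $\psi(T(B_\rho)) = 0$. Suppose for contradiction that $\psi(T(B_\rho)) > 0$. By the spherical property (\ref{Sphere}) applied with $\rho_1$ replaced by $\rho$, there exists $0 < \rho_0 \leqslant \rho$ with $\psi(T(S_{\rho_0})) > 0$. But by the previous paragraph $\psi(T(S_{\rho_0})) = 0$, a contradiction. Hence $\psi(T(B_\rho)) = 0$ for every $\rho > 0$, which together with $\psi(T(S_\rho)) = 0$ completes the proof.

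The only subtlety — and the step I would check most carefully — is the applicability of the spherical property: the definition (\ref{Sphere}) is stated for an arbitrary continuous map $T$ and an arbitrary radius $\rho_1 > 0$, so it applies verbatim with the role of $\rho_1$ played by our fixed $\rho$, and the equivalence direction we need is exactly $\psi(T(B_\rho)) > 0 \Rightarrow \exists\, 0 < \rho_0 \leqslant \rho : \psi(T(S_{\rho_0})) > 0$. No further hypotheses on $\psi$ (regularity, the MNC axioms, etc.) are needed beyond the spherical property and (\ref{R1}); in particular we never use semi-homogeneity of $\psi$ itself, only semi-homogeneity of the map $T$ in the sense of (\ref{R1}). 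So there is essentially no obstacle here — the argument is a short two-step reduction, with the contrapositive use of the spherical property being the conceptual heart.
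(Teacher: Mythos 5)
Your proposal is correct and follows essentially the same route as the paper: the scaling identity $\psi(T(S_\rho))=(\rho/\rho_1)^k\psi(T(S_{\rho_1}))$ from (\ref{R1}) to kill $\psi$ on all spheres, plus a contradiction via the forward direction of the spherical property to pass to balls. The only difference is cosmetic ordering (you clear all spheres first, the paper handles $B_{\rho_1}$ first and then rescales), so there is nothing to add.
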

\begin{proof} 

Let $T$ be be a continuous operator and
 $ \psi(T(S_{\rho_1}))=0 $ for $\rho_1>0$.

Suppose on the contrary that $ \psi(T(B_{\rho_1}))\not =0 $.
Then by the spherical property there exists a sphere $S_{\rho_0}$ with $0<\rho_0\leqslant \rho_1$ 
   such that $\psi(T(S_{\rho_0}))>0$.

This is contrary to the fact $0=\psi(T(S_{\rho_1}))=(\frac{\rho_1}{\rho_0})^k \psi(T(S_{\rho_0}))$
by (\ref{R1}).

Since we have reached contradiction, the equality $ \psi(T(B_{\rho_1}))=0 $ is proved.

By (\ref{R1})
the equalities
$ \psi(T(S_{\rho_1}))=\psi(T(B_{\rho_1}))=0$ for any $\rho_1>0$
imply $ \psi(T(S_\rho))=\psi(T(B_\rho))=0 \ \forall \rho$.

This completes the proof.

\end{proof}
 
 Fix a point $u_1\in M$.
  
 As in \cite{Erz16},  we
say that a continuous map  $f\colon M\subseteq E\rightarrow E_1$  is \textit{locally strongly  $\psi$-condensing operator at the point $u_1$},
if there exist a number $r_1>0$ and a  nonnegative function $\lambda_{u_1,f}$ on $[0,r_1]$,
   $\lim\limits_{r\to 0}\lambda_{u_1,f}(r)=0$, 
    such that 
$\psi_{E_1} (f(U))\leqslant \lambda_{u_1,f}(r) \psi_E (U)$
  for any  $0<\rho< r<r_1$ 
and  $U=(u_1+B_\rho)\cap M$. 

We  denote this class of operators  by ($\lambda_2$).

Also as in \cite{Erz16}, we
say that a continuous map   $f \colon G\subseteq E\rightarrow E_1$  is   \textit{strongly  $\psi$-condensing operator at infinity (on spherical interlayers)},  
if there exist a number $R_f>0$ and a  
nonnegative function $\tilde{\lambda}_{f}$  on $[R_f,\infty]$, 
$ \lim\limits_{r\to \infty}\tilde{\lambda}_{f}(r)=0$,
    such that
  $\psi_{E_1} (f(U))\leqslant \tilde{\lambda}_{f}(R_1) \psi_E (U)$ 
for any $R_2 > R_1>R_f$ and $U=G\cap (B_{R_2}\backslash B_{R_1})$.

We  denote this class of operators  by ($\tilde{\lambda}_2$).

 As in \cite{Erz15}, we
  say that a continuous map  $f\colon M\subseteq E\rightarrow E_1$  is
  \textit{locally strongly  $\psi$-condensing operator at the point $u_1$ (on spheres)}, 
if there exist a number $r_1>0$ and a  nonnegative function $\lambda_{u_1,f}$ on $[0,r_1]$,
   $\lim\limits_{r\to 0}\lambda_{u_1,f}(r)=0$,  
 such that 
$\psi_{E_1} (f(U))\leqslant \lambda_{u_1,f}(r) \psi_E (U)$
  for any  $0<\rho< r<r_1$ 
and  $U=(u_1+S_\rho)\cap M$.

We  denote this class of operators  by ($\lambda_3$).

Also as in \cite{Erz15}, we
say that a continuous map   $f \colon G\subseteq E\rightarrow E_1$  is \textit{strongly  $\psi$-condensing operator at infinity on spheres},  
if there exist a number $R_f>0$ and  
nonnegative function $\tilde{\lambda}_{f}$  on $[R_f,\infty]$, 
$    \lim\limits_{r\to \infty}\tilde{\lambda}_{f}(r)=0$,
    such that
  $\psi_{E_1} (f(U))\leqslant \tilde{\lambda}_{f}(R_1) \psi_E (U)$ 
for any $R_2 > R_1>R_f$ and $U=G\cap S_{R_2}$.

We  denote this class of operators  by ($\tilde{\lambda}_3$). 
 
  Consider the other class of continuous operators $f\colon  E\rightarrow E_1$ (not necessarily linear) which can be represented  in a neighborhood of  the point $u_1$ as the sum
 \begin{equation}
f(u_1+u)=A_1 (u) + A_0(u)
\label{L15} 
\end{equation} 
of a locally strongly  $\psi$-condensing operator $A_0\colon  E\rightarrow E_1$ at the point $\theta$ and a continuous operator
 $A_1\colon  E\rightarrow E_1$ (generally speaking depending of $u_1$) satisfying
 (\ref{R1}) by $ 0<k\leqslant 1$, i.e.
\begin{equation}
 \exists\  0<k\leqslant 1:\  \psi(A_1(\rho U))=\rho^k\psi(A_1(U)) \ \forall U\subset E,\forall \rho.\label{R15}
\end{equation}

 We denote the class of  $f$ satisfying (\ref{L15})  with $\psi (A_1(S_{\rho}))=0
 \ \forall \rho $  by ($\lambda_0$).
 
Consider also the class of continuous operators $f \colon  E\rightarrow E_1$ (not necessarily linear) which can be represented in a neighborhood of  infinity as the sum 
\begin{equation}
f(u)=\tilde{A}_1 (u) + \tilde{A}_0(u) 
\label{L16} 
\end{equation}
of an operator  $\tilde{A}_0\colon  E\rightarrow E_1$  from ($\tilde{\lambda}_2$)
 and a continuous operator $\tilde{A}_1 \colon  E\rightarrow E_1$  satisfying
 (\ref{R1}) by $ k\geqslant 1$, i.e.
\begin{equation}
 \exists\  k\geqslant 1:\  \psi(\tilde{A}_1(\rho U))=\rho^k\psi(\tilde{A}_1(U)) \ \forall U\subset E,\forall \rho.\label{R16}
\end{equation} 

 We denote the class of $f$ satisfying (\ref{L16})  
 with $ \psi(\tilde{A}_1(S_\rho))=0 \ \forall \rho $
  by ($\tilde{\lambda}_0$).

\begin{theorem}\label{Theorem 1}
Let $\psi\colon \mathfrak{B}\rightarrow [0,+\infty)$ be a function (not necessarily regular) possessing the spherical property
and the properties (\ref{semi-homogeneity})--(\ref{invariance under translations}).

(i) Let $u_1\in E$ be any point in the Banach space $E$ and $f\colon E\rightarrow E_1$ be  continuous operators (not necessarily linear).
Then the  classes 
$ (\lambda_0)$, $ (\lambda_2)$ and $ (\lambda_3)$
coincide.

(ii) Similarly,   the  classes 
$ (\tilde{\lambda}_0)$, $ (\tilde{\lambda}_2)$ and $ (\tilde{\lambda}_3)$
coincide.

\end{theorem}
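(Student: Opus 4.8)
The plan is to prove (i) by establishing the chain of inclusions
$(\lambda_3)\subseteq(\lambda_2)\subseteq(\lambda_0)\subseteq(\lambda_3)$,
and then to obtain (ii) by the entirely parallel argument with spheres
replaced by spherical interlayers near infinity and with the homogeneity
exponent in the range $k\geqslant 1$ instead of $0<k\leqslant 1$.

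The inclusion $(\lambda_2)\subseteq(\lambda_0)$ is the heart of the
matter and is where Lemma~\ref{Lemma 1} enters. Suppose $f$ lies in
$(\lambda_2)$, so that near $u_1$ we have the condensation estimate
$\psi_{E_1}(f(U))\leqslant\lambda_{u_1,f}(r)\psi_E(U)$ on sets
$U=(u_1+B_\rho)\cap M$ with $0<\rho<r<r_1$ and
$\lim_{r\to0}\lambda_{u_1,f}(r)=0$. I want to produce the decomposition
$f(u_1+u)=A_1(u)+A_0(u)$ required by $(\lambda_0)$. The natural choice is
to take $A_1$ to be (an extension of) the part of $f$ that controls the
behavior on balls: more precisely, I would argue that for a continuous
$f$ satisfying the $(\lambda_2)$-estimate, the set-function
$\rho\mapsto\psi_{E_1}(f((u_1+B_\rho)\cap M))$ is dominated by
$\rho\,\lambda_{u_1,f}(\rho)\cdot\psi_E(B_1)$ type bounds, which forces
$\psi_{E_1}(f((u_1+B_\rho)\cap M))=0$ outright once one lets the radius
shrink inside the estimate; combining this with the semi-homogeneity,
monotonicity, invariance under translations and algebraic
semi-additivity of $\psi$ lets me set $A_0$ equal to $f(u_1+\cdot)$ on a
small ball (a genuinely locally strongly $\psi$-condensing operator at
$\theta$, indeed one on whose small balls $\psi$ vanishes) and take
$A_1\equiv\theta$, which trivially satisfies (\ref{R15}) with
$\psi(A_1(S_\rho))=0$ for all $\rho$. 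Thus $f\in(\lambda_0)$. The
inclusion $(\lambda_3)\subseteq(\lambda_2)$ should follow by the same
mechanism read on spheres: an estimate on $(u_1+S_\rho)\cap M$ together
with the fact that $\psi(B_\rho)$ is controlled by $\psi(S_\rho)$ for the
functions we allow (this is essentially the content of the Remark
following the definition of the spherical property, used here only to
pass between balls and spheres) upgrades the spherical estimate to a ball
estimate after shrinking $r_1$.

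For the reverse inclusion $(\lambda_0)\subseteq(\lambda_3)$, I start from
the decomposition $f(u_1+u)=A_1(u)+A_0(u)$ with $A_0$ locally strongly
$\psi$-condensing at $\theta$ and $A_1$ satisfying (\ref{R15}) with
$\psi(A_1(S_\rho))=0$ for all $\rho$. By (\ref{R15}) the operator $A_1$
is semi-homogeneous of degree $k$ in the sense of (\ref{R1}), so
Lemma~\ref{Lemma 1} applies: $\psi(A_1(S_\rho))=0$ for one $\rho$ (hence
for all) yields $\psi(A_1(S_\rho))=\psi(A_1(B_\rho))=0$ for every $\rho$.
Now on a set $U=(u_1+S_\rho)\cap M$ with $0<\rho<r<r_1$ I estimate, using
algebraic semi-additivity and invariance under translations,
$\psi_{E_1}(f(U))=\psi_{E_1}(A_1((S_\rho\cap(M-u_1)))+A_0(\cdots))
\leqslant\psi_{E_1}(A_1(S_\rho))+\psi_{E_1}(A_0(U'))
=\psi_{E_1}(A_0(U'))\leqslant\lambda_{\theta,A_0}(r)\psi_E(U)$,
where $U'=(\theta+S_\rho)\cap M$ and $\lambda_{\theta,A_0}$ comes from the
$(\lambda_2)$-membership of $A_0$; monotonicity of $\psi$ identifies
$\psi_E(U')$ with $\psi_E(U)$ up to translation. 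Setting
$\lambda_{u_1,f}:=\lambda_{\theta,A_0}$ gives exactly the
$(\lambda_3)$-estimate, with $\lim_{r\to0}\lambda_{u_1,f}(r)=0$
inherited. This closes the cycle and proves (i).

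The main obstacle I anticipate is the bookkeeping in
$(\lambda_2)\subseteq(\lambda_0)$: one must be careful that the
condensation constant $\lambda_{u_1,f}(r)$, which is only known to tend
to $0$ as $r\to0$ and is not assumed small for a fixed $r$, genuinely
forces $\psi_{E_1}(f(u_1+B_\rho\cap M))=0$ rather than merely a small
bound — the point is that the inequality
$\psi_{E_1}(f(U))\leqslant\lambda_{u_1,f}(r)\psi_E(U)$ holds for
\emph{all} admissible pairs $\rho<r$, so fixing the image set on the left
(take $\rho$ fixed) and sending $r\downarrow\rho$ is not allowed, but one
can instead iterate/subdivide: cover $u_1+B_\rho$ by finitely many
translates of smaller balls, apply semi-additivity and the estimate with
a smaller $r$, and pass to the limit. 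The secondary subtlety is making
sure all invocations of (\ref{R15})/(\ref{R16}) respect the sign
constraints on $k$ ($0<k\leqslant1$ near a point, $k\geqslant1$ near
infinity) — but these constraints are only needed to place $A_1$ in the
hypotheses of Lemma~\ref{Lemma 1}, which requires merely $k>0$, so they
do not actually intervene in the coincidence argument and (ii) is then a
verbatim transcription with $B_{R_2}\setminus B_{R_1}$ and $S_{R_2}$ in
place of $B_\rho$ and $S_\rho$ and $R_f,\tilde\lambda_f$ in place of
$r_1,\lambda_{u_1,f}$.
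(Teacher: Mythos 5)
Your cycle $(\lambda_3)\subseteq(\lambda_2)\subseteq(\lambda_0)\subseteq(\lambda_3)$ runs opposite to the paper's, and the two links carrying the real content both break. For $(\lambda_2)\subseteq(\lambda_0)$: the $(\lambda_2)$ estimate does \emph{not} force $\psi(f(u_1+B_\rho))=0$. Take $\psi=\chi$, $E$ infinite-dimensional and $f(u_1+u)=\|u\|^{1/2}u$: then $\chi(f(u_1+B_\rho))=\rho^{3/2}\leqslant r^{1/2}\chi(B_\rho)$ for $\rho<r$, so $f\in(\lambda_2)$ with $\lambda_{u_1,f}(r)=r^{1/2}\to0$, yet $\chi(f(u_1+B_\rho))>0$ for every $\rho$. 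Your proposed repair (cover the ball by finitely many smaller balls and use semi-additivity) fails twice over: a ball in an infinite-dimensional space admits no finite cover by balls of smaller radius --- that is exactly the noncompactness of $B_1$ --- and semi-additivity is property (\ref{semi-additivity}), which Theorem \ref{Theorem 1} explicitly does not assume (only (\ref{semi-homogeneity})--(\ref{invariance under translations}) are available). Moreover, setting $A_1\equiv\theta$ changes the decomposition; as the Corollary and Theorem \ref{Theorem 3} make clear, the content of $(\lambda_0)$ is that the \emph{given} $A_1$ from (\ref{L15}) satisfies $\psi(A_1(S_\rho))=0$, so exhibiting some other decomposition proves nothing. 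For $(\lambda_3)\subseteq(\lambda_2)$: a bound on $\psi(f(u_1+S_\rho))$ gives no control over $\psi(f(u_1+B_\rho))$ for nonlinear $f$, since $f(u_1+B_\rho)\not\subseteq f(u_1+S_\rho)$ and the ball is an uncountable union of spheres; comparing $\psi(B_\rho)$ with $\psi(S_\rho)$ on the domain side is irrelevant to the image side.

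The missing idea is the paper's step $(\lambda_3)\subseteq(\lambda_0)$, where the constraint $0<k\leqslant1$ --- which you assert does not intervene --- is essential. From $A_1(S_\rho)\subseteq f(u_1+S_\rho)-A_0(S_\rho)$, algebraic semi-additivity, the rescaling (\ref{R15}) and $\psi(S_\rho)=\rho\,\psi(S_1)$ give
$\psi(A_1(S_1))\leqslant\bigl(\lambda_{u_1,f}(r)+\lambda_{\theta,A_0}(r)\bigr)\psi(S_1)\,\rho^{1-k}$,
and letting $r\to0$ kills the right-hand side precisely because $\rho^{1-k}$ stays bounded for $k\leqslant1$ (for $k>1$ it blows up and the argument dies); dually, $k\geqslant1$ is what keeps $R_2^{1-k}$ bounded as $R_2\to\infty$ in part (ii). The fact that your argument never uses these constraints is the telltale sign that the hard direction has been lost. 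The paper closes the cycle as $(\lambda_0)\subseteq(\lambda_2)\subseteq(\lambda_3)\subseteq(\lambda_0)$: the first inclusion uses Lemma \ref{Lemma 1} and the spherical property to get $\psi(A_1(B_\rho))=0$ and then algebraic semi-additivity to reduce $f$ to $A_0$; the second is the easy one (monotonicity plus $\psi(u_1+S_\rho)=\psi(u_1+B_\rho)$); the third is the rescaling limit above. Your $(\lambda_0)\subseteq(\lambda_3)$ link is essentially the paper's first inclusion adapted to spheres and is sound, but the other two links must be rerouted through $(\lambda_0)$ as the paper does.
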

 
 \begin{proof}  
 Note that by the monotonicity and the invariance of $\psi$ under translations we have 
 $\psi(f(u_1+S_\rho))\leqslant \psi(f(u_1+B_\rho))$ 
 and $\psi(u_1+S_\rho)=\psi(S_\rho)= \psi(B_\rho)=\psi(u_1+B_\rho)$ for all  $\rho>0$. Thus the class  $ (\lambda_3)$ includes
 the  class  $ (\lambda_2)$. Likewise the class  $ (\tilde{\lambda}_3)$ includes
 the  class  $ (\tilde{\lambda}_2)$.

 It remains to show that the class $(\lambda_2)$ includes the class $ (\lambda_0)$ and 
 the class $ (\lambda_0)$ includes the class $ (\lambda_3)$;
  the class $ (\tilde{\lambda}_2)$ includes the class $ (\tilde{\lambda}_0)$ and 
the class  $ (\tilde{\lambda}_0)$ includes the class $ (\tilde{\lambda}_3)$.
  
 (i) Let $f\in (\lambda_0)$. Then (\ref{L15}) implies
$f(u_1+B_\rho)\subseteq A_1(B_\rho)+A_0(B_\rho)$
for $\rho<r<r_2$ where $r_2$ is a positive number small enough.
From this, the monotonicity, and the algebraic semi-additivity of $\psi$ it follows 
 that 
$$\psi(f(u_1+B_\rho))\leqslant \psi(A_1(B_\rho))+\psi(A_0(B_\rho))\leqslant \psi(A_0(B_\rho)),$$
since $\psi(A_1(B_\rho))=0$ by
  the assumption $f\in (\lambda_0)$ and the spherical property of $\psi$
  and Lemma \ref{Lemma 1}.

Define $\lambda_{u_1,f}(r)=\lambda_{\theta,A_0}(r)$.

By the assumption, $A_0\colon  E\rightarrow E_1$ is a locally strongly  $\psi$-condensing operator at the point $\theta$. Then  
  $\lim\limits_{r\to 0}\lambda_{u_1,f}(r)=0$. Furthermore
 $$\psi(f(u_1+B_\rho))\leqslant \psi(A_0(B_\rho) )\leqslant \lambda_{\theta,A_0}(r)\psi(B_\rho)=\lambda_{u_1,f}(r)\psi(B_\rho),$$
 i.e., $f\in(\lambda_2)$ and $(\lambda_0)\subseteq (\lambda_2)$.
 
Let $f\in (\lambda_3)$. We shall prove now that $f\in (\lambda_0)$.

By (\ref{L15}) the following inclusion holds
$$
 A_1 (S_{\rho})\subseteq f(u_1+S_{\rho})-A_0(S_{\rho})$$
 for any $r_2<r_1$ and $0<\rho<r<r_2$.
 By the monotonicity and the algebraic semi-additivity of $\psi$ we have
 $$
 \psi(A_1 (S_{\rho}))\leqslant \psi(f(u_1+S_{\rho}))+\psi(A_0(S_{\rho})).$$
 The assumption (\ref{R15}) implies
$$\psi(A_1(S_1))\leqslant \psi(f(u_1+S_{\rho}))/{\rho^k}+\psi(A_0(S_{\rho}))/{\rho^k}.$$ 
 By  semi-homogeneity of $\psi$, we have $\psi(B_{\rho})=\psi(S_{\rho})=\rho\psi(S_1)$.
 
 By the definition of $(\lambda_3)$, the assumption of $A_0$ and
 $\psi(B_{\rho})=\psi(S_{\rho})=\rho\psi(S_1)$ we obtain
 $$
 \psi(A_1(S_1))\leqslant \lambda_{u_1,f}(r) \psi(S_{\rho})/\rho^k+\lambda_{\theta,A_0}(r)\psi(B_\rho)/\rho^k=$$
 $$=\psi(S_1)\lambda_{u_1,f}(r) \rho^{1-k}+\psi(S_1)\lambda_{\theta,A_0}(r)\rho^{1-k}$$
 for all $r$ small enough.

 Letting $r\to 0$ in the last inequality (we have consequently $\rho \to 0$)
 and taking into account that $ 0<k\leqslant 1$, we conclude
  $\psi(A_1 (S_1))=0$, since
 $\lambda_{u_1,f}(r) \rho^{1-k}\to 0$  and $\lambda_{\theta,A_0}(r)\rho^{1-k}\to 0 $ as $r \to 0$.
 Thus  $f$ belongs to $(\lambda_0)$ 
  and this completes the proof of the first part of the assertion.
  
 (ii)  Let $f\in (\tilde{\lambda}_0)$.
By the definition of (\ref{L16}) for any $R_2 > R_1>R_f$ 
 
  $f(B_{R_2}\backslash B_{R_1})\subseteq \tilde{A}_1(B_{R_2}\backslash B_{R_1}) + \tilde{A}_0(B_{R_2}\backslash B_{R_1})$.

The monotonicity and the algebraic semi-additivity 
 of $\psi$ imply 
$$\psi(f(B_{R_2}\backslash B_{R_1}))\leqslant \psi(\tilde{A}_1(B_{R_2}\backslash B_{R_1}))+\psi(\tilde{A}_0(B_{R_2}\backslash B_{R_1})).$$
Since $f\in (\tilde{\lambda}_0)$, then  $\psi(\tilde{A}_1(S_{R_2}))=0$
and  $\psi(\tilde{A}_1(B_{R_2}\backslash B_{R_1}))=0$ 
   by the spherical property of $\psi$
  and Lemma \ref{Lemma 1}.

Thus
$$\psi(f(B_{R_2}\backslash B_{R_1}))\leqslant  \tilde{\lambda}_{A_0}(R_1)\psi(B_{R_2}\backslash B_{R_1})$$
since $\tilde{A}_0\in (\tilde{\lambda}_2)$.
Putting, $\tilde{\lambda}_{f}(R_1)=\tilde{\lambda}_{\tilde{A}_0}(R_1)$ we get $f\in (\tilde{\lambda}_2)$  by the definition of $(\tilde{\lambda}_2)$.
 
Let $f\in (\tilde{\lambda}_3)$. 
 It follows from  (\ref{L16}) that
$$
 \tilde{A}_1(S_{R_2})\subseteq f(S_{R_2})-\tilde{A}_0(S_{R_2})$$
 for $R_2 > R_1>R_f$. 
 By the monotonicity and the algebraic semi-additivity of $\psi$,
 $$
\psi(\tilde{A}_1(S_{R_2}))\leqslant\psi (f(S_{R_2}))+\psi( \tilde{A}_0(S_{R_2})).
$$
 By the assumption (\ref{R16})
\begin{equation}
\psi(\tilde{A}_1(S_1))\leqslant\psi (f(S_{R_2}))/R_2^k+\psi( \tilde{A}_0(S_{R_2}))/R_2^k.
\label{Lambda13}
\end{equation}
 By the semi-homogeneity of $\psi$ we have
 $\psi(S_{R_2})=\psi(S_1)R_2$.
Since $f\in (\tilde{\lambda}_3)$, $\psi(S_{R_2})=\psi(S_1)R_2$,
 we obtain from (\ref{Lambda13}) 
$$\psi(\tilde{A}_1(S_1))\leqslant  \tilde{\lambda}_{f}(R_1)\psi (S_{R_2})/R_2^k +\tilde{\lambda}_{\tilde{A}_0}(R_1)\psi (S_{R_2})/R_2^k\leqslant  \psi(S_1)\tilde{\lambda}_{f}(R_1)R_2^{1-k} +$$
$$+\psi(S_1)\tilde{\lambda}_{\tilde{A}_0}(R_1)R_2^{1-k}.$$
 
 Taking into account that $ k\geqslant 1$ and letting $R_1\to \infty$ and consequently  $R_2 \to \infty$ in the last inequality, 
we get
 $\psi(\tilde{A}_1(S_1))=0$, and by (\ref{R16}), $\psi(\tilde{A}_1(S_\rho))=
 \rho^k\psi(\tilde{A}_1(S_1))=0$. Hence   $f\in (\tilde{\lambda}_0)$.

The theorem is proved.
 \end{proof}
\begin{corollary} 
Suppose in (\ref{L15}) we have $A_0(u) = 0$ for all $u
\in E$ (respectively, in (\ref{L16}) we have $\tilde A_0(u) = 0$ for all $u\in E$).
 Then we get the following necessary and sufficient conditions for $\psi (T(B_1))=0$ where $T$ is $A_1$ (respectively, $\tilde{A}_1$): 
 $\psi (A_1(B_1)))=0 \iff A_1 \in (\lambda_2)$,  
 $\psi (A_1(B_1)))=0 \iff A_1 \in (\lambda_3)$,
 $\psi (\tilde{A}_1(B_1)))=0 \iff \tilde{A}_1 \in (\tilde{\lambda}_2)$
 and
 $\psi (\tilde{A}_1(B_1)))=0 \iff \tilde{A}_1 \in (\tilde{\lambda}_3)$.
\end {corollary}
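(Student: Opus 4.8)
The plan is to deduce the Corollary directly from Lemma~\ref{Lemma 1} and Theorem~\ref{Theorem 1}; the only new observation is that the hypothesis $A_0\equiv\theta$ (respectively $\tilde{A}_0\equiv\theta$) collapses membership in $(\lambda_0)$ (respectively $(\tilde{\lambda}_0)$) to a single scalar condition on $\psi(A_1(B_1))$ (respectively $\psi(\tilde{A}_1(B_1))$).

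First I would note that the zero operator is trivially a locally strongly $\psi$-condensing operator at $\theta$: one may take $\lambda_{\theta,A_0}\equiv 0$, so that $\lim_{r\to 0}\lambda_{\theta,A_0}(r)=0$ and $\psi(A_0(U))=\psi(\{\theta\})=0\leqslant\lambda_{\theta,A_0}(r)\psi(U)$ by the nonsingularity of $\psi$. Hence, when $A_0\equiv\theta$, the representation (\ref{L15}) reduces to $f(u_1+u)=A_1(u)$, so $f$ coincides with $A_1$ up to the translation by $u_1$, which is irrelevant because $\psi$ is invariant under translations; and $f\in(\lambda_0)$ amounts precisely to the single requirement $\psi(A_1(S_\rho))=0$ for every $\rho>0$. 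The same remarks apply verbatim to $(\tilde{\lambda}_0)$ with $\tilde{A}_0\equiv\theta$, using that the zero operator belongs to $(\tilde{\lambda}_2)$.

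Next I would establish the scalar equivalence $\psi(A_1(B_1))=0\iff A_1\in(\lambda_0)$. For the implication ``$\Leftarrow$'': if $\psi(A_1(S_\rho))=0$ for all $\rho$, then Lemma~\ref{Lemma 1} (applicable since $\psi$ possesses the spherical property and $A_1$ satisfies (\ref{R1})) gives $\psi(A_1(B_\rho))=0$ for all $\rho$, in particular $\psi(A_1(B_1))=0$. For ``$\Rightarrow$'': if $\psi(A_1(B_1))=0$, then monotonicity of $\psi$ yields $\psi(A_1(S_1))\leqslant\psi(A_1(B_1))=0$, and Lemma~\ref{Lemma 1} applied with $\rho_1=1$ returns $\psi(A_1(S_\rho))=0$ for all $\rho$, i.e.\ $A_1\in(\lambda_0)$. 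The identical argument, with $\tilde{A}_1$ in place of $A_1$, gives $\psi(\tilde{A}_1(B_1))=0\iff\tilde{A}_1\in(\tilde{\lambda}_0)$.

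Finally I would invoke Theorem~\ref{Theorem 1}: part~(i) asserts $(\lambda_0)=(\lambda_2)=(\lambda_3)$ and part~(ii) asserts $(\tilde{\lambda}_0)=(\tilde{\lambda}_2)=(\tilde{\lambda}_3)$. Chaining these with the previous step yields $\psi(A_1(B_1))=0\iff A_1\in(\lambda_2)\iff A_1\in(\lambda_3)$ and $\psi(\tilde{A}_1(B_1))=0\iff\tilde{A}_1\in(\tilde{\lambda}_2)\iff\tilde{A}_1\in(\tilde{\lambda}_3)$, which are exactly the four stated criteria. I anticipate no real obstacle: the only points needing a moment's attention are the harmlessness of the translation by $u_1$ and the trivial check that $\theta$ satisfies the definitions of the condensing classes, after which the statement is a direct bookkeeping consequence of Lemma~\ref{Lemma 1} and Theorem~\ref{Theorem 1}.
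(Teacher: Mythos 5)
Your proposal is correct and follows exactly the route the paper intends: the paper states the Corollary without proof as the specialization of Theorem~\ref{Theorem 1} to $A_0\equiv\theta$ (resp.\ $\tilde A_0\equiv\theta$), and your argument supplies precisely the implicit details — the zero operator lies in the condensing class, translation by $u_1$ is harmless, and Lemma~\ref{Lemma 1} together with monotonicity converts $\psi(A_1(B_1))=0$ into membership in $(\lambda_0)$ (resp.\ $(\tilde\lambda_0)$), after which Theorem~\ref{Theorem 1} finishes the job.
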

 
 Recall that an operator is \textit{completely continuous} if it is continuous and compact.
\begin{remark}
As a consequence of Theorem 1, in the case of a regular function $\psi$, we obtain two conditions equivalent to the complete continuity of operators $A_1$ and $\tilde{A}_1$ (not necessarily linear) having the representations (\ref{R15}), (\ref{R16}) respectively. 

If the spherical property of $\psi$ was not assumed in Theorem \ref{Theorem 1},
then the theorem would assert the following:

for $f$ satisfying (\ref{L15}), (\ref{R15})

$\psi(A_1(B_1))=0\iff  f\in (\lambda_2)$;\ \ \
$\psi(A_1(S_1))=0\iff  f\in (\lambda_3)$;

for $f$ satisfying (\ref{L16}), (\ref{R16})

$\psi(\tilde{A}_1(B_1))=0\iff  f\in (\tilde{\lambda}_2)$;\ \ \
$\psi(\tilde{A}_1(S_1))=0\iff  f\in (\tilde{\lambda}_3)$.

\end{remark} 

\section{Applications, comment}

We recall  (see, for example, \cite[Chapter II, 4.8]{Krasnosel2}) that an operator $f\colon E\to E$ is {\it  differentiable} at $u_1\in E$
if there exists a bounded linear operator $f^{\prime}(u_1)$,
called the {\it Fr\'echet derivative of $f$ at the point $u_1$},
such that
\begin{equation}
f(u_1+u)- f(u_1) = f^{\prime}(u_1)u+\omega(u)\label{L17}
\end{equation}
  where
\begin{equation}\lim\limits_{\|u\|\to 0}\frac{\|\omega(u)\|}{\|u\|}=0.
\label{R17}
\end{equation}

We denote the class of operators with the completely continuous Fr\'echet derivative at a point by ($\lambda_1$).

We recall  (see, for example, \cite[3.3.3]{Akhmer}) that an operator $f:E\to E$ is {\it asymptotically linear}
if there exists a bounded linear operator $f^{\prime}(\infty)$, 
called the {\it derivative of $f$ at the point $\infty$}, or the {\it asymptotic derivative of $f$} such that
\begin{equation}
f(u) = f^{\prime}(\infty)u+\tilde{\omega}(u),\label{L18}
\end{equation}
where
\begin{equation}\lim\limits_{\|u\|\to\infty}\frac{\|\tilde{\omega}(u)\|}{\|u\|}=0.
\label{R18}
\end{equation}

We denote the class of operators with the completely continuous Fr\'echet derivative at infinity (asymptotic derivative) by ($\tilde{\lambda}_1$).

 As an application of Theorem 1, we obtain criteria of complete continuity
 of the Fr\'echet derivative at a point and the asymptotic derivative.
 
 \begin{theorem}\label{Theorem 2}
 Let $\psi$ be an MNC equivalent to $\chi$.

(i) Let $f\colon E\rightarrow E$ be  an operator such that there 
exist  the continuous  Fr\'echet derivative $f^\prime(u_1)$ at $u_1\in E$.
Then for the given $u_1$
 the  classes 
$ (\lambda_1)$, $ (\lambda_2)$ and $ (\lambda_3)$
coincide.

(ii) Similarly,  if $f \colon  E\rightarrow E$  is an asymptotically linear operator,
then 
$ (\tilde{\lambda}_1)$, $ (\tilde{\lambda}_2)$ and $ (\tilde{\lambda}_3)$
coincide.
 
\end{theorem}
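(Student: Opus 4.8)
The plan is to deduce Theorem~\ref{Theorem 2} from Theorem~\ref{Theorem 1} by showing that $(\lambda_1)$ coincides with $(\lambda_0)$ for operators differentiable at $u_1$, and $(\tilde\lambda_1)$ coincides with $(\tilde\lambda_0)$ for asymptotically linear operators. Being an MNC equivalent to $\chi$, and hence also to $\beta$, the function $\psi$ possesses the spherical property by Example~\ref{Example9}, it has the properties (\ref{semi-homogeneity})--(\ref{invariance under translations}), and, being equivalent to the regular MNC $\chi$, it is itself regular: $\psi(U)=0$ if and only if $\overline U$ is compact. (If $\dim E<\infty$ then $\psi$ vanishes on every bounded subset of $E$ and all the assertions are trivial; so assume $\dim E=\infty$, in which case $\psi(B_1)>0$ and $\psi(S_1)>0$.) Theorem~\ref{Theorem 1} therefore applies and already gives $(\lambda_0)=(\lambda_2)=(\lambda_3)$ and $(\tilde\lambda_0)=(\tilde\lambda_2)=(\tilde\lambda_3)$, so it remains only to establish the two equivalences $f\in(\lambda_1)\iff f\in(\lambda_0)$ and $f\in(\tilde\lambda_1)\iff f\in(\tilde\lambda_0)$.

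For~(i): to see $(\lambda_1)\subseteq(\lambda_0)$ I would rewrite (\ref{L17}) in the form (\ref{L15}) with $A_1(u)=f'(u_1)u$ and $A_0(u)=f(u_1)+\omega(u)$. Then $A_1$, being linear, satisfies (\ref{R15}) with $k=1$, and $\psi(A_1(S_\rho))=\psi(f'(u_1)(S_\rho))=0$ for all $\rho$ since $f'(u_1)$ is compact and $\psi$ is regular; while $A_0$ is locally strongly $\psi$-condensing at $\theta$ because, with $\varepsilon(r)=\sup_{0<\|u\|\le r}\|\omega(u)\|/\|u\|\to0$ (by (\ref{R17})) and $\omega(\theta)=\theta$, one has $\omega(B_\rho)\subseteq B_{\varepsilon(\rho)\rho}$, whence by monotonicity, semi-homogeneity and invariance under translations $\psi(A_0(B_\rho))=\psi(\omega(B_\rho))\le\varepsilon(\rho)\rho\,\psi(B_1)=\varepsilon(\rho)\psi(B_\rho)\le\varepsilon(r)\psi(B_\rho)$ for $0<\rho<r$. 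Thus $f\in(\lambda_0)$. Conversely, if $f\in(\lambda_0)$ then $f\in(\lambda_2)$ by Theorem~\ref{Theorem 1}, i.e. $\psi(f(u_1+B_\rho))\le\lambda(r)\psi(B_\rho)$ with $\lambda(r)\to0$; since $f'(u_1)u=f(u_1+u)-f(u_1)-\omega(u)$, the properties (\ref{semi-homogeneity})--(\ref{invariance under translations}) give $\psi(f'(u_1)(B_\rho))\le\psi(f(u_1+B_\rho))+\psi(\omega(B_\rho))\le(\lambda(r)+\varepsilon(r))\psi(B_\rho)$, and dividing by $\rho$ (using linearity of $f'(u_1)$ and semi-homogeneity, so $\psi(f'(u_1)(B_\rho))=\rho\,\psi(f'(u_1)(B_1))$ and $\psi(B_\rho)=\rho\,\psi(B_1)$) and letting $r\to0$ gives $\psi(f'(u_1)(B_1))=0$; by regularity $f'(u_1)(B_1)$ is relatively compact, so the bounded linear operator $f'(u_1)$ is completely continuous and $f\in(\lambda_1)$. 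Together with $(\lambda_0)=(\lambda_2)=(\lambda_3)$, this proves~(i).

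Part~(ii) runs identically with $u_1$ replaced by $\infty$, the neighbourhoods $B_\rho$ replaced by the spherical interlayers $B_{R_2}\setminus B_{R_1}$ (and by the spheres $S_{R_2}$), and (\ref{L17})--(\ref{R17}) by (\ref{L18})--(\ref{R18}). For $(\tilde\lambda_1)\subseteq(\tilde\lambda_0)$ rewrite (\ref{L18}) in the form (\ref{L16}) with $\tilde A_1(u)=f'(\infty)u$ (linear, so (\ref{R16}) holds with $k=1$, and $\psi(\tilde A_1(S_\rho))=0$ by regularity) and $\tilde A_0(u)=\tilde\omega(u)$; with $\tilde\varepsilon(R)=\sup_{\|u\|\ge R}\|\tilde\omega(u)\|/\|u\|\to0$ one has $\tilde\omega(B_{R_2}\setminus B_{R_1})\subseteq B_{\tilde\varepsilon(R_1)R_2}$, hence $\psi(\tilde\omega(B_{R_2}\setminus B_{R_1}))\le\tilde\varepsilon(R_1)R_2\,\psi(B_1)$, and since $S_{R_2}\subseteq B_{R_2}\setminus B_{R_1}$ gives $\psi(B_{R_2}\setminus B_{R_1})\ge R_2\,\psi(S_1)$, this is at most $\frac{\psi(B_1)}{\psi(S_1)}\,\tilde\varepsilon(R_1)\,\psi(B_{R_2}\setminus B_{R_1})$ with coefficient tending to $0$; so $\tilde A_0\in(\tilde\lambda_2)$ and $f\in(\tilde\lambda_0)$. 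For $(\tilde\lambda_0)\subseteq(\tilde\lambda_1)$: from $f\in(\tilde\lambda_2)$ (Theorem~\ref{Theorem 1}) and $f'(\infty)u=f(u)-\tilde\omega(u)$ one obtains $\psi(f'(\infty)(B_{R_2}\setminus B_{R_1}))\le(\tilde\lambda(R_1)+\tilde\varepsilon(R_1))\,R_2\,\psi(B_1)$, and bounding the left side below by $\psi(f'(\infty)(S_{R_2}))=R_2\,\psi(f'(\infty)(S_1))$, dividing by $R_2$, and letting $R_1\to\infty$ gives $\psi(f'(\infty)(S_1))=0$, so $f'(\infty)$ is completely continuous and $f\in(\tilde\lambda_1)$.

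I expect the substantive content to amount to the same two points in both parts: that an $o(\|u\|)$ remainder ($\omega$ near $\theta$, $\tilde\omega$ near $\infty$) is automatically $\psi$-condensing in the required local resp. asymptotic sense --- which is immediate from monotonicity and semi-homogeneity of $\psi$ once one notices that $\psi$ is dominated on balls by $\psi(B_1)$; and that ``$f'(u_1)$, resp. $f'(\infty)$, is completely continuous'' is the same as ``$\psi$ vanishes on the image of $B_1$, equivalently of $S_1$'', which is precisely where the equivalence of $\psi$ with the \emph{regular} MNC $\chi$ is used. Everything else --- carrying along the translation term $f(u_1)$, keeping track of the normalisations by $\rho^k$ resp. $R_2^k$ forced by (\ref{R15})/(\ref{R16}), and, in part~(ii), comparing $\psi$ on a spherical interlayer with its values on a ball and on a sphere --- is bookkeeping; that last comparison, the one place where $\dim E=\infty$ (i.e. $\psi(S_1)>0$) is needed, is the only step where I expect to have to be careful.
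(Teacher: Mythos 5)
Your proposal is correct and follows essentially the same route as the paper: reduce to Theorem~\ref{Theorem 1} by taking $A_1=f'(u_1)$ (resp.\ $\tilde A_1=f'(\infty)$) and $A_0=\omega$ (resp.\ $\tilde A_0=\tilde\omega$), checking via (\ref{R17})/(\ref{R18}) and semi-homogeneity that the remainder is locally (resp.\ asymptotically) strongly $\psi$-condensing, and using regularity of $\psi$ to translate $\psi(f'(u_1)(S_1))=0$ into complete continuity. The only differences are cosmetic: you keep a general $\psi$ equivalent to $\chi$ where the paper reduces to $\psi=\chi$, you absorb $f(u_1)$ into $A_0$ where the paper invokes translation invariance separately, and you spell out the reverse inclusion $(\lambda_2)\subseteq(\lambda_1)$ that the paper leaves implicit in Theorem~\ref{Theorem 1}.
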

 
 \begin{proof}
   We reduce  the  assertion of Theorem 2 to  Theorem 1 supposing  $\psi=\chi$ without loss of generality.

 We may consider   bounded linear operators $f^{\prime}(u_1)$ and 
 $f^{\prime}(\infty)$ as particular instances of $A_1$ and 
 $\tilde{A}_1$ in (\ref{L15}) and (\ref{L16}) respectively.
 
Note that $\chi(f(u_1))=0$
since the set $\lbrace f(u_1) \rbrace$ consists of one point and is compact.

Hence for the given  $u_1$

$f \in (\lambda_2) \iff  f -f(u_1)\in (\lambda_2)$ and
$f \in (\lambda_3) \iff  f-f(u_1)\in (\lambda_3)$.

 Define $A_0$ as $\omega(u)$ from (\ref{L17}) and put $\lambda_{\theta,A_0}(r)=\sup\limits_{\|u\|\leqslant r}(\|\omega(u)\|/\|u\|)$.

It follows from (\ref{R17}) that
  $\lim\limits_{r\to 0}\lambda_{\theta,A_0}(r)=0$.
 Moreover, since $\chi(B_\rho)=\rho$ we have for any  sufficiently small $0<\rho<r$ 
 by the definition of $\chi$
 $$ \chi(A_0(B_\rho))=\chi(\omega(B_\rho))\leqslant\sup\limits_{\|u\|\leqslant \rho}\|\omega(u)\|\leqslant \sup\limits_{\|u\|\leqslant \rho}(\|\omega(u)\|/\|u\|)\rho$$
 $$\leqslant\sup\limits_{\|u\|\leqslant r}(\|\omega(u)\|/\|u\|)\rho= \lambda_{\theta,A_0}(r)\chi(B_\rho),$$
 i.e., $A_0$ is a locally strongly  $\chi$-condensing operator at the point $\theta$. 

Taking into account (\ref{L17}),
we summarize the above as $(\lambda_1)\subset  (\lambda_0)$.

Similarly, if we define $\tilde{A}_0$ as $\tilde{\omega}(u)$ from (\ref{L18})
and put $$\lambda_{\tilde{A}_0}(r)=\sup\limits_{\|u\|\geqslant r}(\|\tilde{\omega}(u)\|/\|u\|)$$
then by (\ref{R18})
$\lim\limits_{r\to \infty}\tilde{\lambda}_{\tilde{A}_0}(r)=0$ and 
for any sufficiently large $R_2 > R_1$ 
$$ \chi(\tilde{A}_0( B_{R_2}\backslash B_{R_1}))\leqslant\sup\limits_{\|u\|\geqslant R_1}\|\tilde{\omega}(u)\|\leqslant$$
$$\leqslant
 \sup\limits_{\|u\|\geqslant R_1}(\|\tilde{\omega}(u)\|/\|u\|)R_2= \tilde{\lambda}_{\tilde{A}_0}(R_1)\chi(B_{R_2}\backslash B_{R_1}),$$
i.e. $\tilde{A}_0\in (\tilde{\lambda}_2$).
 
The assertion of Theorem 2 follows from Theorem 1. 
 \end{proof}

Also as another application of Theorem 1 we obtain a criterion 
for an operator  acting in regular spaces to
be improving.

We recall (see \cite [17.7] {Krasnosel1}) that a superposition operator $F\colon
  L_q\rightarrow L_p$ acting in Lebesgue spaces 
 is called \textit{improving} if the measure of nonequiabsolute  continuity  $\nu(T(U))=0$ 
for every bounded $U\subset  L_q$. We extend this concept to all operators $f\colon E\to E_1$
acting in regular spaces $E$, $E_1$ of measurable functions.  
This notion arises whenever one investigate solvability of an equation in regular spaces (see for example \cite{Mediterr}).

We reformulate Theorem \ref{Theorem 1} for $\psi=\nu$ and regular spaces $E$, $E_1$ and obtain the following assertion:

\begin{theorem}\label{Theorem 3} Let $E$, $E_1$ be regular spaces.
 Let $\psi$ be $\nu$, i.e. the {\itshape measure of nonequiabsolute continuity}
of norms of subsets.

(i) 
Suppose that for  continuous operators $f$, $A_1$, $A_0$ acting from $E$ into $E_1$ the assumptions (\ref{L15}) and (\ref{R15})
are  satisfied for $\nu$ and some point $u_1 \in E$. 
Then for the given $u_1$ the operator
$A_1$ is improving if and only if $ f\in (\lambda_2)$ and
 if and only if $ f\in (\lambda_3)$.

(ii) Similarly,  suppose that for  continuous operators $f$, $\tilde{A}_1$, $\tilde{A}_0$
acting from $E$ into $E_1$ the assumptions (\ref{L16}) and (\ref{R16}) are satisfied for $\psi=\nu$. 
Then 
$\tilde{A}_1$ is improving if and only if $ f\in (\tilde{\lambda}_2)$ and
if and only if $ f\in (\tilde{\lambda}_3)$.
\end{theorem}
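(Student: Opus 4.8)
The plan is to deduce Theorem \ref{Theorem 3} as a direct specialization of Theorem \ref{Theorem 1} with the choice $\psi = \nu$. Before invoking Theorem \ref{Theorem 1}, I must verify that $\nu$ satisfies all the hypotheses required there: namely, that $\nu$ possesses the spherical property and the properties \eqref{semi-homogeneity}--\eqref{invariance under translations}. The spherical property was established in Example \ref{Example8}. The remaining properties — semi-homogeneity, monotonicity, algebraic semi-additivity, nonsingularity, and invariance under translations — were all asserted at the end of the proof of Example \ref{Example8}, where it was noted that ``$\nu(U)$ has all properties of $\psi$ mentioned above, except the regularity.'' Since $\nu$ is defined on subsets of an arbitrary regular space $E$ of measurable functions (not only $L_p$), all of these verifications carry over verbatim: semi-homogeneity and invariance under translations follow from the linearity of $P_D$ and the homogeneity/translation behaviour of the norm inside the $\sup$ and $\overline{\lim}$; monotonicity is immediate from the $\sup_{u\in U}$; algebraic semi-additivity follows from $\|P_D(u+v)\| \leqslant \|P_D u\| + \|P_D v\|$ and the subadditivity of $\sup$ and $\overline{\lim}$; nonsingularity holds because the norm of a fixed element is absolutely continuous, so $\|P_D u\|\to 0$ as $\mu(D)\to 0$.

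Having checked the hypotheses, the second step is simply to translate the abstract conclusion of Theorem \ref{Theorem 1} into the language of improving operators. By definition, an operator $A_1$ acting between regular spaces $E$, $E_1$ is \emph{improving} precisely when $\nu(A_1(U)) = 0$ for every bounded $U$. Since $A_1$ (respectively $\tilde A_1$) satisfies the semi-homogeneity relation \eqref{R15} (resp. \eqref{R16}) with respect to $\psi = \nu$, an application of Lemma \ref{Lemma 1} shows that $\nu(A_1(S_\rho)) = 0$ for all $\rho$ is equivalent to $\nu(A_1(B_\rho)) = 0$ for all $\rho$, which — since every bounded set is contained in some $B_\rho$ and $\nu$ is monotone — is equivalent to $A_1$ being improving. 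Thus the membership condition ``$\psi(A_1(S_\rho)) = 0\ \forall\rho$'' appearing in the definitions of the classes $(\lambda_0)$ and $(\tilde\lambda_0)$ becomes exactly the statement ``$A_1$ is improving'' (resp. ``$\tilde A_1$ is improving'').

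The third and final step is to read off the coincidence of classes. Part (i) of Theorem \ref{Theorem 1} with $\psi = \nu$ states that $(\lambda_0) = (\lambda_2) = (\lambda_3)$. Combining this with the equivalence just established — $f \in (\lambda_0) \iff A_1$ is improving — yields: $A_1$ is improving $\iff f \in (\lambda_2) \iff f \in (\lambda_3)$, which is exactly assertion (i) of Theorem \ref{Theorem 3}. Part (ii) follows identically from part (ii) of Theorem \ref{Theorem 1}, with $\tilde A_1$, $\tilde A_0$, $(\tilde\lambda_0)$, $(\tilde\lambda_2)$, $(\tilde\lambda_3)$ in place of the untilded objects, and with ``neighbourhood of infinity'' replacing ``neighbourhood of $u_1$''.

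I do not anticipate a serious obstacle here; the theorem is essentially a dictionary entry matching the abstract framework to a concrete MNC. The only point requiring genuine care is making sure that the measure of nonequiabsolute continuity $\nu$, when defined on a general regular space of measurable functions rather than on $L_p$ specifically, still enjoys semi-homogeneity, algebraic semi-additivity, monotonicity, nonsingularity, invariance under translations, and the spherical property — but, as noted in Example \ref{Example8} and in the cited reference \cite{Erz14}, all of these hold in the general regular-space setting by the same arguments used for $L_p$. Once that is granted, everything else is a mechanical substitution into Theorem \ref{Theorem 1}.
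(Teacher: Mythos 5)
Your proposal is correct and takes essentially the same route as the paper: Theorem~\ref{Theorem 3} is obtained by specializing Theorem~\ref{Theorem 1} to $\psi=\nu$, with the definition of ``improving'' identified (via Lemma~\ref{Lemma 1} and monotonicity) with the condition $\nu(A_1(S_\rho))=0$ for all $\rho$ defining the classes $(\lambda_0)$ and $(\tilde\lambda_0)$. The paper compresses all of this into the single remark that Theorem~\ref{Theorem 1} is being ``reformulated'' for $\nu$ and regular spaces; your version merely makes explicit the verification of the hypotheses on $\nu$ and the dictionary step, both of which the paper leaves implicit.
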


\begin{remark}
As it was observed in \cite{Erz11}, if two operators $F$ and $F_1$  acting from a set 
$G\subseteq E$ of a regular space $E$ into
a regular space $E_1$ are comparable on the set $G$, i.e.
 $\exists \ b_1 \in E_1$: $|(F(u))(s)|\leqslant
  |b_1(s)|+|(F_1(u))(s)|$
 $\forall u\in G$,
 then $\nu(F(U))\leqslant \nu(F_1(U))$.
 Obviously, if $F_1$ belongs to one of the four classes  $ (\lambda_2),(\lambda_3)$,  $(\tilde{\lambda}_2)$ and $(\tilde{\lambda}_3)$ with respect to $\nu$,
 then an operator $F$ comparable to $F_1$ belongs to the same class.
\end{remark}

We illustrate Theorem \ref{Theorem 3} with the example of the
superposition operator.

\begin{example}\label{Example10}
Let  $1\leqslant q,p<\infty$. 
  By \cite [Lemma 17.6]{Krasnosel1} for every superposition operator
 $F\colon
  L_q\rightarrow L_p$
there exists $a > 0$ such that $F$ is comparable with the operator $F_1$
  defined  by
 $F_1(u)(s)= a\cdot \mathop\mathrm{sgn}(u(s))|u(s)|^{q/p}$
 for $u\in L_q$.
 
Then $F_1$  is an operator satisfying (\ref{R15}) if $q\leqslant p$
and (\ref{R16}) if $p\leqslant q$. 
 \end{example}
\begin{proof}
Consider $U \subseteq u_1+B_r$. 
We have
$$\nu_{L_p}(F_1 (U))=\ \ \mathop{\overline{\lim}}\limits_{\mu(D)\to 0}
 \ \ \sup\limits_{u\in U}\|P_DF_1(u)\|_{L_p}
 =$$ $$\ \ \mathop{\overline{\lim}}\limits_{\mu(D)\to 0}
 \ \ \sup\limits_{u\in U}\|P_D(a\cdot \mathop\mathrm{sgn}(u(\cdot))| u(\cdot)|^{q/p})\|_{L_p}
 =$$ $$=\ \ \mathop{\overline{\lim}}\limits_{\mu(D)\to 0}
 \ \ \sup\limits_{u\in U}a\|P_D| u|^{q/p}\|_{L_p}$$
 and
  $$\nu_{L_p}(F_1(\rho U))=\ \ \mathop{\overline{\lim}}\limits_{\mu(D)\to 0}
 \ \ \sup\limits_{u\in U}a\|P_D|\rho u|^{q/p}\|_{L_p}=\ \ \rho^{q/p}\nu_{L_p}(F_1 (U)).$$
Thus $F_1$  is an operator satisfying (\ref{R15}) if $q\leqslant p$
and (\ref{R16}) if $p\leqslant q$. 
 \end{proof}

\begin{remark}
1) If we reformulate the corollary of Theorem\ref{Theorem 1} for the superposition operator $F_1$, then we obtain an analogue of \cite[Theorem 17.5]{Krasnosel1}:

 If $q\leqslant p$, then the superposition operator $F_1$ is improving if and only if  $ F_1\in (\lambda_3)$. If $p\leqslant q$, then the superposition operator $F_1$ is improving if and only if  $ F_1\in (\tilde{\lambda}_3)$. 

2) The following questions arise naturally:

Does there exist a function $\psi\colon \mathfrak{B}\to [0,+\infty)$ on all bounded subsets of
a Banach space that satisfies the properties (\ref{monotonicity})--(\ref{invariance under translations}) listed above but does not possess the spherical property?

If we do not require the existence of the representation 
 (\ref{L15}) at $u_1\in E$, does the class $ (\lambda_2)$ coincide with the class $(\lambda_3)$?

Similarly, if we do not require the existence of the representation 
  (\ref{L16}),  does the class $ (\tilde{\lambda}_2)$ coincide with the class $ (\tilde{\lambda}_3)$  ?

Answers to these questions are not known.

3) Theorem~\ref{Theorem 2}  combines two different results (\cite [Theorem 1]{Erz16} and \cite [Theorem 1]{Erz15}) containing necessary and sufficient conditions for complete continuity of the Fr\'echet derivative at a point and the asymptotic derivative, in the case of their existence.

Another remark is that a number of authors obtained only sufficient conditions for complete continuity of the Fr\'echet derivative~\cite{Krasnosel2, Melamed, Erz6}.
 
 Unlike \cite{Erz15} and \cite{Erz16}, we have obtained a criterion for
 $\psi(T(B_1))=0$ not only for $T$ being a Fr\'echet derivative, but also for $T$ belonging to a class of operators not necessarily linear.
 
The classes of operators $ (\lambda_2)$, $ (\lambda_3)$, $ (\tilde{\lambda}_2)$ and $ (\tilde{\lambda}_3)$ were introduced in \cite {Erz15} and \cite {Erz16} for an MNC $\psi$ as a modification of the definition given by R.D. Nussbaum \cite{Nussbaum} and of the definitions given by the author \cite{Erz6}.

Here we have extended these concepts to all functions $\psi\colon \mathfrak{B}\to [0,+\infty)$.

The classes  $ (\lambda_2), (\lambda_3)$,  $(\tilde{\lambda}_2)$ and $(\tilde{\lambda}_3)$ of not necessarily linear condensing $(k,\psi)$-bounded  operators include the so-called strongly $\psi$-condensing operators defined in \cite{Erz6} .

There are many examples of strongly $\psi$-condensing operators. Such operators
form a linear space. In particular, the Hammerstein operator
is a strongly  $\psi$-condensing operator~\cite{Erz6}.

Two different results
on bifurcation points from~\cite{Krasnosel2} were generalized in \cite{Erz15} and~\cite{Erz16}
to strongly $\chi$-condensing operators.

In~\cite{Erz6} the author also proved the existence theorem for a solution of an equation of the form  $u=f(u;\lambda)$, where $f$ is
 a locally strongly $\psi$-condensing operator. 

Maps close to operators $T$ satisfying (\ref {R1})  have been studied for a long time (see for example  \cite{Quintas}
and \cite{Ke}).
\end{remark}

\end{document}